\definecolor{myred}{HTML}{c20014}
\definecolor{mygreen}{HTML}{008000}
\crefname{equation}{}{}
\Crefname{equation}{Equation}{Equations}
\newtheorem{theo}{Theorem}
\newtheorem{coro}{Corollary}
\theoremstyle{definition}
\theoremstyle{remark}
\newtheorem{rem}{Remark}
\newtheorem{as}{Assumption}
\crefname{as}{Assumption}{Assumptions}
\Crefname{as}{Assumption}{Assumptions}
\crefname{claim}{Claim}{Claims}
\crefname{theo}{Theorem}{Theorems}
\newcommand{\M}{\mathcal{M}}
\newcommand{\dt}[1]{\frac{\mathrm{d}#1}{\mathrm{d}t}}
\newcommand{\dl}[2]{\frac{\partial#1}{\partial#2}}
\newcommand{\ddl}[2]{\frac{\partial^2#1}{\partial#2^2}}
\newcommand{\dddl}[2]{\frac{\partial^3#1}{\partial#2^3}}
\newcommand{\ddddl}[2]{\frac{\partial^4#1}{\partial#2^4}}
\newcommand{\D}{\mathcal{D}}
\newcommand{\B}{\mathcal{B}}
\renewcommand{\d}{\mathrm{d}}
\renewcommand{\L}{\mathcal{L}}
\newcommand{\R}{\mathbb{R}}
\newcommand{\T}{\mathcal{T}}
\newcommand{\C}{\mathcal{C}}
\newcommand{\Op}{\mathcal{O}}
\newcommand{\yr}{y_\mathrm{ref}}
\DeclareMathOperator{\dist}{dist}
\DeclareMathOperator{\id}{id}
\DeclareMathOperator{\range}{Range}
\newcommand{\F}{\mathcal{F}}
\renewcommand{\leq}{\leqslant}
\renewcommand{\geq}{\geqslant}
\renewcommand{\epsilon}{\varepsilon}
\newcommand{\eps}{\varepsilon}
\date{}
\title{
On forwarding techniques for stabilization and set-point output regulation of semilinear infinite-dimensional systems\thanks{This work has been partially supported by the Academy of Finland Grant number 349002.}
}
\author{Nicolas Vanspranghe\thanks{Mathematics and Statistics, Faculty of Information Technology and Communication Sciences, Tampere University, P.O. Box 692, 33101 Tampere, Finland. Emails: \nolinkurl{name.surname@tuni.fi}.}%
\and Lucas Brivadis\thanks{Universit\'e Paris-Saclay, CNRS, CentraleSup\'elec, Laboratoire des Signaux et Syst\`emes, 91190, Gif-sur-Yvette, France. Email: \nolinkurl{lucas.brivadis@centralesupelec.fr}.}
\and Lassi Paunonen\footnotemark[2]
}
\begin{document}

\maketitle

\begin{abstract}
A  stabilizer based on the forwarding technique is proposed for semilinear infinite\hyp{}dimensional systems in cascade form. Sufficient conditions for local exponentially stability and global asymptotic stability of the closed-loop are derived. Results for the problem of local set-point output regulation are also obtained. Finally, an application to a system consisting of a flexible beam attached to a rotating joint is proposed.
\end{abstract}

\section{Introduction}

The stabilization of infinite-dimensional systems presenting a cascade form is a modern challenge in control theory and has been investigated by many researchers in recent years.
In particular, a strong focus has been made on cascades of partial differential equations (PDEs) with ordinary differential equations (ODEs) \cite{MarAst22conf, BalMar23, DEUTSCHER2021109338}, or of ODEs with PDEs \cite{marx2021forwarding, astolfi:hal-04007322}, or of PDEs with other PDEs \cite{auriol2, auriol3}.
Approaches known as ``backstepping'' or ``forwarding'' (each relying on a different paradigm, and adapted to different cascade structures) have been developed to tackle the issue.
In this paper, we focus on the cascade of two infinite-dimensional systems where the first one is semilinear and control-affine,
and the second one is linear, neutrally stable (Lyapunov stable but not asymptotically stable) and driven by a semilinear output of the first one.
This type of systems arises in two contexts in practice: in the stabilization of cascade systems where the second system is linear and driven by an output of the first one, and in the output regulation by means of integral action of semilinear infinite-dimensional systems with 
potentially semilinear output.

Concerning the output regulation problem, many recent works have proposed to extend the linear finite-dimensional theory of \cite{1101104} to linear infinite-dimensional systems by means of an infinite-dimensional internal model principle \cite{Pohjolainen, bymes2000output,rebarber2003internal,paunonen2010internal,paunonen2015controller,doi:10.1137/17M1136407}.
The extension of these works to the infinite-dimensional abstract nonlinear context is still an open problem, although some recent progress have been made in specific contexts
\cite{logemann2000time, natarajan2016approximate}.

We propose to tackle the cascade stabilization problem by means of the forwarding approach that was originally developed for cascades of nonlinear ODEs in \cite{MazPra96, PraOrt01}. This approach is well-suited for the class of cascades under consideration in the present paper, making use of the internal stability of the first subsystem. It could also allow to extend our results to systems with saturated input as in \cite{marx2021forwarding}, and does not rely on any specific hyperbolic structure of the PDE (systems are considered in an abstract framework).
The extension of this strategy to infinite-dimensional systems is an active research area, especially for linear systems (or for linear systems with saturated or cone-bounded nonlinearities applied to the input) \cite{terrand2019adding, MarAst22conf, marx2021forwarding,Nat21}.
In order to deal with nonlinear infinite-dimensional systems, the theory developed in \cite{mattia-forwarding} for finite-dimensional systems by means of incremental stability is instrumental, and has recently been adapted in the context of infinite-dimensional output regulation in \cite{VanBri23}.

The present paper is a continuation of the work proposed in \cite{VanBri23}: while the class of systems under consideration is less generic (here, we restrict ourselves to semilinear systems),
the results obtained here are stronger than the ones presented in \cite{VanBri23} regarding the following points: the input-to-state stability assumption on the first system is weaker; the input acts via a linear operator that may depend on the state; the output of the first system may be nonlinear; the dynamics of the first system is not assumed to be globally contracting; the dynamics of the second system is more general than a pure integrator.
These extensions require modifying the controller designed by forwarding, and in particular improving the result of existence of an invariant graph.
Finally, we apply our results to a system which consists of a flexible beam attached to a rotating joint.

\paragraph*{Organization of the paper}
In \cref{sec:stab}, we tackle the cascade stabilization problem. The   controller we propose ensures local exponential stability and global asymptotic stability under the assumption of existence of some invariant graph. Sufficient conditions for this
are derived in \cref{sec:syl}. We apply our results in the context of output regulation in \cref{sec:reg}, and to an example of flexible structure in \cref{sec:flex}.

\paragraph*{Notation} The norm of a given normed vector space $E$ is denoted by $\|\cdot \|_E$. If $x \in E$ and $r > 0$, $\B_E(x, r)$ denotes the open ball of $E$ centered at $x$ and of radius $r$. 
Let $E_1$ and $E_2$ be normed vector spaces. Then, $\L(E_1, E_2)$ denotes the space of bounded (i.e., continuous) linear operators from $E_1$ to $E_2$ equipped with the operator norm. We say that a map $f : E_1 \to E_2$ is Fr\'echet differentiable at $x \in E_1$ if there exists a (necessarily unique) linear map $\d f(x) \in \L(E_1, E_2)$ such that $f(x + h) = f(x) + \d f(x) h + o(\|h\|_{E_1})$ as $\|h\|_{E_1} \to 0$. Also, $\C^1(E_1, E_2)$ denotes the space of continuously Fr\'echet differentiable maps from $E_1$ to $E_2$, i.e., those $f$ for which the Fr\'echet differential $\d f$ is continuous from $E_1$ to $\L(E_1, E_2)$. By a \emph{locally} Lipschitz continuous map $f : E_1 \to E_2$, we mean a map that is Lipschitz continuous on every bounded subset of $E_1$.  The scalar product of a given Hilbert space $E$ is written $\langle \cdot, \cdot \rangle_E$. If $E_1$ and $E_2$ are Hilbert spaces, each operator $L \in \L(E_1, E_2)$ possesses an adjoint $L^* \in \L(E_2, E_1)$
that is uniquely defined by $\langle Lx, y \rangle_{E_2} = \langle x, L^*y \rangle_{E_1}$ for all $x \in E_1$ and $y \in E_2$. Integrals of functions taking values in Banach spaces are understood in the sense of Bochner.


\section{Stabilization of cascade systems}
\label{sec:stab}


Let $A : \D(A) \to X$ be the infinitesimal generator of a strongly continuous semigroup $\{e^{tA} \}_{t \geq 0}$ on a real Hilbert space $X$. Its domain $\D(A)$ is equipped with the graph norm.
Given input and output spaces $U$ and $Y$, both of which are assumed to be real Hilbert spaces as well, consider the  control system

\begin{subequations}
\label{eq:cascade-x}
\begin{align}
\label{eq:open-loop-x}
&\dot{x} = Ax + f(x) + g(x)u, \\
\label{eq:z-sub}
&\dot{z} = Sz + Cx + h(x),
\end{align}
\end{subequations}
where:
\begin{itemize}
\item $f:X\to X$ is locally Lipschitz continuous, $f(0) = 0$;
\item $g : X \to \L(U, X)$ is locally Lipschitz continuous;
\item $h : X \to Y$ is locally Lipschitz continuous, $h(0) = 0$;
\item $C \in \L(\D(A),  Y)$, i.e., $C$ is $A$-bounded;
\item $S \in \L(Y)$ is skew-adjoint, i.e., $S^* = - S$.
\end{itemize}
The controlled $x$-subsystem is governed by a semilinear equation and has a (possibly nonlinear) output that is fed to the linear $z$-subsystem, which we wish to stabilize. We further assume that the $x$-component is stable or can be pre-stabilized in a way that ensures the following properties.

\begin{as}[Semiglobal Input-to-State Stability (ISS) of the $x$-subsystem]
\label{as:ISS}There exists a Lyapunov functional $V \in \C^1(X, \R)$ 
that is quadratic-like, i.e., there exist $m_1, m_2 > 0$ such that
\begin{equation}
\label{eq:strict-lyap}
m_1 \|x\|^2_X \leq V(x) \leq m_2 \|x\|^2_X, \quad \forall x \in X,
\end{equation}
and there exists $\beta > 0$ such that for all $x \in \D(A)$ and all $u \in U$,
\begin{equation}
\label{eq:ISS0}
\d V(x) [Ax + f(x) + g(x)u] \leq \beta \|u\|^2_U
\end{equation}
Moreover, for any bounded open set $\B\subset X$, there exist a quadratic-like Lyapunov functional $V_\B \in \C^1(X, \R)$ 
and $\alpha_\B, \beta_\B > 0$ such that for all $x \in \D(A)\cap \B$ and all $u \in U$,
\begin{equation}
\label{eq:ISS}
\d V_\B(x) [Ax + f(x) + g(x)u] \leq - \alpha_\B V_\B + \beta_\B \|u\|^2_U.
\end{equation}
\end{as}



\Cref{as:ISS} implies the following formal statements: along all solutions to \cref{eq:open-loop-x},
\begin{equation}
\dot{V} \leq \beta \| u \|^2_U,
\end{equation}
and, given an open bounded set $\B$, if $x$ remains in $\B$ then
\begin{equation}
\dot{V}_\B \leq - \alpha_\B V_\B + \beta_\B \|u\|^2_U,
\end{equation}
for some Lyapunov function $V_\B$. While this formulation may seem  unusual, it will naturally arise in our PDE application.

The next assumption connects the $x$- and $z$-subsystems and is instrumental in building a forwarding-based controller.
\begin{as}[Invariant graph]
\label{as:forwarding}
There exists a map $\M \in \C^1(X, Y)$ with $\d \M$ locally Lipschitz continuous such that $\M(0) = 0$ and for all $x \in \D(A)$,
\begin{equation}
\label{eq:forwarding}
\d \M(x) (A + f)(x)  = S \M(x) +  (C + h)(x).
\end{equation}
\end{as}
Its geometric interpretation  is that, when $u = 0$, the graph of $\M$ is an invariant manifold for the  cascade \cref{eq:cascade-x}. Under \cref{as:forwarding}, we consider the nonlinear state feedback
\begin{equation}
\label{eq:forwarding-feedback}
u = g(x)^*\d \M(x)^* [z - \M(x)],
\end{equation}
which can be modelled as a locally Lipschitz map on the extended state space $X \times Y$. 
\begin{theo}[Well-posedness]
\label{claim:wp}xz
For any initial data $[x_0, z_0]\in X \times Y$, there exists a unique (global) mild solution $[x, z] \in \C(\mathbb{R}^+, X \times Y)$ to the closed-loop equations \cref{eq:cascade-x}-\cref{eq:forwarding-feedback}. If $[x_0, z_0] \in \D(A) \times Y$, then $[x, z]$ is a classical solution and enjoys the regularity $[x, z] \in \C(\R^+, \D(A) \times Y) \cap \C^1(\R^+, X \times Y)$. Furthermore, for any $\tau > 0$, the map $[x_0, z_0] \mapsto [x, y]$ is continuous from $X \times Y$ to $\C([0, \tau], X \times Y)$ equipped with the uniform norm.
\end{theo}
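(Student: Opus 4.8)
The plan is to remove the $A$-bounded operator $C$ from the dynamics by passing to the forwarding error coordinate $\zeta := z - \M(x)$, then invoke the standard semigroup theory for semilinear equations with locally Lipschitz nonlinearities, and finally transform back. First I would compute, at least formally (rigorously along classical solutions), the dynamics of $\zeta$: differentiating and substituting the closed-loop equations gives
\[
\dot\zeta = Sz + Cx + h(x) - \d\M(x)[Ax + f(x) + g(x)u],
\]
and inserting the invariant-graph identity \cref{eq:forwarding} makes every occurrence of $C$ and $h$ cancel, leaving $\dot\zeta = S\zeta - \d\M(x)g(x)u$ with $u = g(x)^*\d\M(x)^*\zeta$. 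The transformed system for $[x, \zeta]$ thus reads
\begin{align*}
\dot x &= Ax + f(x) + g(x)g(x)^*\d\M(x)^*\zeta, \\
\dot\zeta &= S\zeta - \d\M(x)g(x)g(x)^*\d\M(x)^*\zeta,
\end{align*}
whose generator is the diagonal operator $\mathrm{diag}(A, S)$ on $X \times Y$ (a $C_0$-semigroup, since $S \in \L(Y)$ generates the unitary group $\{e^{tS}\}$), and whose nonlinearity is locally Lipschitz from $X \times Y$ into itself, being built from products of the locally Lipschitz and locally bounded maps $f$, $g$, $\d\M$. This is the crux: the coordinate change trades the ill-behaved term $Cx$ (only defined on $\D(A)$) for a genuine locally Lipschitz perturbation.

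Next I would apply the classical local well-posedness theory for semilinear Cauchy problems (e.g. Pazy): the transformed system admits a unique maximal mild solution $[x, \zeta] \in \C([0, t_{\max}), X \times Y)$, it is a classical solution for data in $\D(A) \times Y$ (the domain of $\mathrm{diag}(A, S)$, since $S$ is bounded), it depends continuously, indeed locally Lipschitz-continuously, on the initial data over compact subintervals, and it obeys the blow-up alternative $t_{\max} < \infty \Rightarrow \|[x(t), \zeta(t)]\|_{X \times Y} \to \infty$ as $t \to t_{\max}$. For global existence I would establish an a priori bound via the Lyapunov candidate $W(x, \zeta) := V(x) + \tfrac{c}{2}\|\zeta\|_Y^2$ with $c \geq \beta$, where $V$ is from \cref{as:ISS}. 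Along classical solutions, \cref{eq:ISS0} gives $\dot V \leq \beta\|u\|_U^2$, while skew-adjointness of $S$ yields $\langle \zeta, S\zeta\rangle_Y = 0$, whence $\tfrac{\d}{\d t}\tfrac{1}{2}\|\zeta\|_Y^2 = -\langle \zeta, \d\M(x)g(x)u\rangle_Y = -\|u\|_U^2$; therefore $\dot W = \dot V + c\,\tfrac{\d}{\d t}\tfrac12\|\zeta\|_Y^2 \leq (\beta - c)\|u\|_U^2 \leq 0$. This gives $m_1\|x(t)\|_X^2 + \tfrac{c}{2}\|\zeta(t)\|_Y^2 \leq W(x_0, \zeta_0)$, a bound uniform in $t$, ruling out blow-up for classical solutions.

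To propagate this to arbitrary mild solutions I would approximate the initial data in $\D(A) \times Y$ (dense in $X \times Y$), use continuous dependence on compact subintervals together with the continuity of $W$ to carry the uniform bound to the limit, and conclude $t_{\max} = \infty$ in all cases. Finally I would transform back by setting $z := \zeta + \M(x)$; since $\M$ is continuous this yields $[x, z] \in \C(\R^+, X \times Y)$, and the continuous-dependence statement transfers because $\M$ is locally Lipschitz and trajectories from nearby data remain in a common bounded set on $[0, \tau]$. When $[x_0, z_0] \in \D(A) \times Y$ the transformed solution is classical, so $x \in \C(\R^+, \D(A)) \cap \C^1(\R^+, X)$ and $\zeta \in \C^1(\R^+, Y)$; then $t \mapsto \M(x(t))$ is $\C^1$ with derivative $\d\M(x(t))\dot x(t)$, giving $[x, z] \in \C(\R^+, \D(A) \times Y) \cap \C^1(\R^+, X \times Y)$, and reversing the chain-rule computation above (now licit since $x(t) \in \D(A)$) confirms that $[x, z]$ solves \cref{eq:cascade-x}--\cref{eq:forwarding-feedback} classically.

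The main obstacle throughout is the $A$-boundedness of $C$, which obstructs a direct variation-of-constants treatment of the original $z$-equation; the forwarding change of variables is precisely what circumvents it, and the remaining delicate point is the density and continuous-dependence argument needed to transport the classical a priori estimate and global existence to genuine mild solutions.
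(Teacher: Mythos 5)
Your proposal is correct in its essential strategy, but it takes a genuinely different route from the paper. The paper stays in the original coordinates and views the closed loop as a locally Lipschitz perturbation of the linear cascade $\dot{x} = Ax$, $\dot{z} = Cx$; its key step is showing that the operator matrix $\begin{bsmallmatrix} A & 0 \\ C & 0 \end{bsmallmatrix}$ with domain $\D(A) \times Y$ generates a $C_0$-semigroup on $X \times Y$ (closedness, nonempty resolvent set, and existence of classical solutions, via \cite[Theorem 3.1.12]{AreBat01book}), after which \cite[Theorem 11.1.5]{CurZwa20book} gives local mild/classical solutions and continuous dependence, globality being postponed to the Lyapunov estimate \cref{eq:dec-W}. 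You instead pass to the forwarding error coordinate $\zeta = z - \M(x)$ and use the invariant-graph identity \cref{eq:forwarding} to cancel the $A$-bounded coupling $Cx + h(x)$, reducing to a semilinear problem with diagonal generator and locally Lipschitz nonlinearity. This trades the operator-matrix generation argument for a logical dependence of well-posedness on \cref{as:forwarding} (harmless here, since the feedback \cref{eq:forwarding-feedback} already presupposes it, but note the paper's well-posedness argument never uses the identity \cref{eq:forwarding}, only the Lipschitz regularity of $\M$ and $\d\M$). Your global-existence step is self-contained, and your coefficient choice $c \geq \beta$ is in fact the correct one: with the paper's $W = V + \frac{\beta}{4}\|z - \M(x)\|^2_Y$ and \cref{eq:ISS0} one obtains $\dot{W} \leq \beta \|u\|^2_U - \frac{\beta}{2}\|u\|^2_U = + \frac{\beta}{2}\|u\|^2_U$, so the coefficient in \cref{eq:dec-W} needs to be enlarged (e.g.\ to $\frac{3\beta}{4}$), exactly as your computation indicates.

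Two caveats. The substantive one: your concluding claim that the transformed-back function is \emph{the} mild solution of \cref{eq:cascade-x}--\cref{eq:forwarding-feedback} leaves the solution concept for the original equations unspecified. Since $Cx(s)$ is meaningless for $x(s) \in X \setminus \D(A)$, the variation-of-constants formula for the original cascade requires precisely the cascade semigroup that your route avoids constructing; what you actually produce (and prove unique) are locally uniform limits of classical solutions. This is a legitimate solution concept, and it coincides with the paper's mild solutions by density and continuous dependence, but to make that identification you would still need the operator-matrix generation result -- otherwise you must explicitly adopt the limit-of-classical-solutions definition, which the theorem statement does not do. Second, classical regularity for data in $\D(A) \times Y$ does not follow from Pazy's theorems under a merely locally Lipschitz nonlinearity (they require a $\C^1$ nonlinearity); some supplementary argument (e.g.\ strong solutions in reflexive spaces, or exploiting the near-differentiability provided by the Lipschitz continuity of $\d\M$) is needed, although the paper's own citation is subject to the same fine print.
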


\begin{proof}
Observe that \cref{eq:cascade-x} in closed-loop with \cref{eq:forwarding-feedback} constitute a locally Lipschitz perturbation of the linear equations $\dot{x} = Ax, \dot{z} = Cx$. Those
 generate a strongly continuous semigroup on $X \times Y$. Indeed, taking advantage of the $A$-boundedness of $C$, one can show txhat the operator matrix
$
\begin{bsmallmatrix}
A & 0 \\
C & 0
\end{bsmallmatrix}
$
with domain $\D(A) \times Y$  is closed and  has nonempty resolvent set. In that case, semigroup generation is equivalent to existence of unique classical solutions for all initial data in the domain \cite[Theorem 3.1.12]{AreBat01book}, which is immediate.
Then, existence and uniqueness of local mild and classical solutions for the nonlinear problem together with uniform continuous dependence on the initial data follow from \cite[Theorem 11.1.5]{CurZwa20book}. The proof that all closed-loop solutions are global is postponed: our stability analysis will show that they cannot blow up in finite time -- see \cref{eq:dec-W} below.x
\end{proof}
\begin{rem}
All formal computation performed in the sequel can be justified 
by considering classical solutions and passing to the limit in suitable expressions for general initial data.
\end{rem}

\begin{theo}[Local Exponential Stability]
\label{claim:LES}Suppose that
\begin{equation}
\label{eq:range-cond}
\range \d \M(0) g(0) = Y.
\end{equation}
Then the zero equilibrium is Locally Exponentially Stable 
for \cref{eq:cascade-x} in closed loop with \cref{eq:forwarding-feedback}.
\end{theo}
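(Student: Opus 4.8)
The plan is to introduce the forwarding error $\zeta := z - \M(x)$, which measures the deviation from the invariant graph, and to show that in the coordinates $(x, \zeta)$ the closed loop admits a strict Lyapunov function near the origin. Differentiating along classical solutions, $\dot\zeta = \dot z - \d\M(x)\dot x$; substituting $\dot z = Sz + Cx + h(x)$ and, via the invariant-graph identity \cref{eq:forwarding}, $\d\M(x)(A + f)(x) = S\M(x) + Cx + h(x)$, the inhomogeneous terms $Cx + h(x)$ cancel and $Sz - S\M(x)$ collapses to $S\zeta$, leaving
\begin{equation}
\dot\zeta = S\zeta - \Pi(x)\Pi(x)^*\zeta, \qquad \Pi(x) := \d\M(x) g(x) \in \L(U, Y).
\end{equation}
Since the feedback \cref{eq:forwarding-feedback} reads $u = \Pi(x)^*\zeta$ and $S$ is skew-adjoint, testing against $\zeta$ yields
\begin{equation}
\tfrac{1}{2}\tfrac{\d}{\d t}\|\zeta\|_Y^2 = \langle \zeta, S\zeta\rangle_Y - \|\Pi(x)^*\zeta\|_U^2 = -\|u\|_U^2.
\end{equation}
Thus the $\zeta$-subsystem dissipates exactly the quantity $\|u\|_U^2$ that drives the $x$-subsystem in \cref{eq:ISS}.

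Next I would turn the range condition \cref{eq:range-cond} into coercivity. Surjectivity of $\Pi(0) = \d\M(0)g(0)$ between Hilbert spaces implies that $\Pi(0)\Pi(0)^*$ is boundedly invertible, i.e.\ $\Pi(0)\Pi(0)^* \geq c\,\id$ on $Y$ for some $c > 0$. Because $x \mapsto \Pi(x)$ is continuous (as $g$ and $\d\M$ are) and $\M(0) = 0$, there is a ball $\B := \B_X(0, r)$ on which $\Pi(x)\Pi(x)^* \geq \tfrac{c}{2}\id$ and $\|\Pi(x)\|_{\L(U,Y)} \leq M$; hence for $x \in \B$,
\begin{equation}
\tfrac{c}{2}\|\zeta\|_Y^2 \leq \|u\|_U^2 = \|\Pi(x)^*\zeta\|_U^2 \leq M^2 \|\zeta\|_Y^2.
\end{equation}
This coercive lower bound, which is precisely where \cref{eq:range-cond} enters, guarantees that $\zeta$ is genuinely damped, and not merely nonincreasing, whenever $x$ stays in $\B$.

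I would then combine the two estimates. Applying \cref{as:ISS} to the bounded set $\B$ provides a quadratic-like $V_\B$ with $\dot V_\B \leq -\alpha_\B V_\B + \beta_\B \|u\|_U^2$ while $x \in \B$. Setting $W := V_\B + \tfrac{\kappa}{2}\|\zeta\|_Y^2$ with $\kappa > \beta_\B$, the shared term becomes dissipative:
\begin{equation}
\dot W \leq -\alpha_\B V_\B + (\beta_\B - \kappa)\|u\|_U^2 \leq -\alpha_\B V_\B - (\kappa - \beta_\B)\tfrac{c}{2}\|\zeta\|_Y^2 \leq -\gamma W
\end{equation}
for a suitable $\gamma > 0$, using the lower bound on $\|u\|_U^2$. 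Since $(x, z) \mapsto (x, z - \M(x))$ is a $\C^1$ diffeomorphism fixing the origin, $W$ is equivalent to $\|x\|_X^2 + \|z\|_Y^2$ near $0$; choosing a sublevel set $\{W \leq \rho\}$ small enough to sit inside $\B \times Y$ makes it positively invariant (as $\dot W < 0$ there), so solutions starting in it remain in $\B$ for all time, the differential inequality holds along the whole trajectory, and $W(t) \leq e^{-\gamma t} W(0)$. Undoing the norm equivalence gives local exponential decay of $\|x(t)\|_X + \|z(t)\|_Y$, which is the claim; as usual these computations are first carried out for classical solutions and extended by density per the Remark.

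The main obstacle I anticipate is the passage from the algebraic range condition \cref{eq:range-cond} to the \emph{uniform} coercivity $\|\Pi(x)^*\zeta\|_U^2 \geq \tfrac{c}{2}\|\zeta\|_Y^2$ on a full neighborhood, together with the bookkeeping that the single quantity $\|u\|_U^2$ simultaneously acts as the dissipation rate of $\zeta$ and as the forcing of $x$. The weight $\kappa > \beta_\B$ is what tips this balance, but it must be reconciled with the confinement-to-$\B$ argument so that the local ISS estimate \cref{eq:ISS} is legitimately available along the entire solution.
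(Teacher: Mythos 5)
Your proposal is correct and shares the paper's core strategy: the forwarding error $\zeta = z - \M(x)$, the dissipation identity $\tfrac{1}{2}\tfrac{\d}{\d t}\|\zeta\|_Y^2 = -\|u\|_U^2$ coming from skew-adjointness of $S$ and \cref{eq:forwarding}, the passage from the range condition \cref{eq:range-cond} to uniform coercivity of $g(x)^*\d\M(x)^*$ on a ball $\B$ (duality plus continuity, exactly as in \cref{eq:local-coerc}), and a composite Lyapunov function $V_\B + \mathrm{const}\cdot\|\zeta\|_Y^2$ closed by Gr\"onwall and a quadratic norm equivalence near the origin. There are, however, two genuine differences worth recording. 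First, the confinement step: the paper runs a two-stage argument, using the \emph{global} non-strict functional $W = V + \mathrm{const}\cdot\|\zeta\|_Y^2$ built from \cref{eq:ISS0} to prove positive invariance of all sublevel sets $\mathcal{N}_c$ (hence Lyapunov stability), and only then invoking \cref{as:ISS} on the resulting bounded set to obtain the strict functional $W_\B$; you dispense with \cref{eq:ISS0} entirely and let the strict local functional do both jobs, confining the trajectory by the standard bootstrap (a sublevel set chosen, via the quadratic lower bound on $V_\B$, to lie strictly inside $\B \times Y$ cannot be exited, by a first-exit-time argument). Both routes are valid; the paper's global step buys Lyapunov stability of \emph{every} sublevel set, which it reuses in the proof of \cref{claim:GAS}, while your version is more self-contained for the purely local statement. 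Second, your explicit weight condition $\kappa > \beta_\B$ is precisely the right bookkeeping: with weight $\tfrac{\kappa}{2}$ on $\|\zeta\|_Y^2$ the derivative collects $(\beta_\B - \kappa)\|u\|_U^2$, which is dissipative only if $\kappa > \beta_\B$. The coefficient $\tfrac{\beta_\B}{4}$ appearing in \cref{eq:Wb} (and likewise $\tfrac{\beta}{4}$ in the global $W$) corresponds to $\kappa = \tfrac{\beta_\B}{2}$ and does not close the estimate \cref{eq:Wb-strict} as stated; it should be read as, e.g., $\tfrac{3\beta_\B}{4}$. Your accounting silently corrects this slip in the paper's constants.
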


\begin{proof}
Consider the Lyapunov candidate
\begin{equation}
W(x, z) \triangleq V(x) + \frac{\beta}{4} \|z - \M(x) \|^2_Y
\end{equation}
with $\beta$ as in \cref{eq:ISS0}. Along solutions to \cref{eq:cascade-x}-\cref{eq:forwarding-feedback}, we have
\begin{equation}
\label{eq:derivative-W}
\dot{W} = \dot{V} + \frac{\beta}{2} \langle z - \M(x),  \dot{z} - \d \M(x) \dot{x} \rangle_Y.
\end{equation}
Since $S$ is skew-adjoint, 
\begin{equation}
\langle z - \M(x), Sz \rangle_Y = \langle z - \M(x), S \M(x) \rangle_Y
\end{equation}
Therefore, plugging \cref{eq:open-loop-x,eq:forwarding-feedback,eq:forwarding} leads to
\begin{equation}
\begin{aligned}
\langle z - \M(x),  \dot{z} - \d \M(x) \dot{x} \rangle_U  &= \langle z - \M(x), - \d \M(x)g(x) u \rangle_U  \\ &= - \|g(x)^* \d \M(x)^*[z - \M(x)]\|^2_U.
\end{aligned}
\end{equation}
Using the (global) ISS property \cref{eq:ISS0}, we obtain
\begin{equation}
\label{eq:dec-W}
\dot{W} \leq - \frac{\beta}{2} \| u \|^2_U \leq 0.
\end{equation}
\Cref{eq:dec-W} shows that the $W$-sublevel sets
\begin{equation}
\mathcal{N}_c \triangleq \{ [x_0, z_0] \in X \times Y : W(x_0, z_0) \leq c \}, \quad c > 0,
 \end{equation}
are positively invariant under the flow of \cref{eq:cascade-x}-\cref{eq:forwarding-feedback}. Furthermore, since $\M$ is continuous and vanishes at zero, using also \cref{eq:strict-lyap}, for any $\eps > 0$ we can find $c >0$ such that $\mathcal{N}_c \subset \B_{X \times Y}(0, \eps)$ and, conversely, for any $c > 0$, there exists $\delta > 0$ such that $ \B_{X\times Y}(0, \delta) \subset \mathcal{N}_c$. This combined with positive invariance of each $\mathcal{N}_c$ proves that the origin is Lyapunov stable.
On the other hand, because of the range condition \cref{eq:range-cond}, a transposition argument -- see, e.g., \cite[Theorem 2.20]{Bre11book} -- provides $\lambda > 0$ such that
\begin{equation}
\|g(0)^* \d \M(0)^* y \|^2_U \geq \lambda \|y\|^2_Y, \quad \forall y \in Y.
\end{equation}
Because $x \mapsto g(x) \d \M(x)$ is continuous, it is possible to find some ball $\Op \triangleq \B_X(0, r)$, $r > 0$, such that
\begin{equation}
\label{eq:local-coerc}
\|g(x)^* \d \M(x)^* y \|^2_U \geq \frac{\lambda}{2} \|y\|^2_Y, \quad \forall x \in \Op, \forall y \in Y.
\end{equation}
We are now ready to prove local exponential stability of the origin. 
Fix $c >0$ such that
$\mathcal{N}_c \subset \Op \times Y
$ and pick $\delta > 0$ such that $\B_{X \times Y}(0, \delta) \subset \mathcal{N}_c$. The
$x$-coordinate of any closed-loop solution originating from $\mathcal{N}_c$ remains in the $V$-sublevel set $ \B \triangleq \{ x \in X : V(x) < 2c \}$, which is bounded. Thus, by \cref{as:ISS} there exists a Lyapunov functional $V_\B$ and positive constants $\alpha_\B$, $\beta_\B$ such that $\dot{V}_\B \leq -\alpha_\B V_\B + \beta_\B \|u\|^2_U$ along all closed-loop solutions originating from $\mathcal{N}_c$.
We let
\begin{equation}
\label{eq:Wb}
W_\B(x, z) \triangleq V_\B(x) + \frac{\beta_\B}{4} \|z - \M(x) \|^2_Y
\end{equation}
and this time,  similarly as in \crefrange{eq:derivative-W}{eq:dec-W} but taking advantage of \cref{eq:local-coerc}, we obtain
\begin{equation}
\label{eq:Wb-strict}
\begin{aligned}
\dot{W}_\B &
 \leq - \alpha_\B {V}_\B - \frac{\beta_\B \lambda}{4} \|z - \M(x) \|^2_Y \\
 & \leq - \min  \{ \alpha_\B, \lambda  \} W_\B
\end{aligned}
\end{equation}
along all solutions to \cref{eq:cascade-x}-\cref{eq:forwarding-feedback} originating from $\mathcal{N}_c$.
It then follows from \cref{eq:Wb-strict} and Gr\"onwall's lemma that 
\begin{equation}
W_\B(x(t), z(t)) \leq e^{- \min  \{ \alpha_\B, \lambda  \} t} W_\B(x_0, z_0)
\end{equation}
for all $t \geq 0$ and any closed-loop solution $[x, z]$  with initial data $[x_0, z_0]$ taken in $\mathcal{N}_c$. To conclude,
we recall that $\M$ is Lipschitz continuous on the bounded set $\B$, $\M(0) = 0$ and $0 \in \B$. On the other hand, $V_\B$ has quadratic upper and lower bounds. Thus, using a couple of triangular inequalities, we deduce that there exist positive constants $K_1, K_2$ such that $K_1 W_\B(x, z) \leq \|x\|^2_X + \|z\|^2_Y \leq K_2 W_\B(x, z)$ for all $x \in \B$ and $z \in Y$, which completes the proof.
\end{proof}

\begin{theo}[Global Asymptotic Stability]
\label{claim:GAS}
In addition to \cref{eq:range-cond}, suppose that
\begin{enumerate}[label=(\roman*)]
\item 
\label{it:L2-adm}
The linear semigroup $\{e^{tA}\}_{t \geq 0}$ is exponentially stable;
\item $Y$ is finite-dimensional.
\end{enumerate}
Then the zero equilibrium is Globally Asymptotically Stable for \cref{eq:cascade-x} in closed loop with \cref{eq:forwarding-feedback}.
\end{theo}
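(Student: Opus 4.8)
The plan is to run a LaSalle\hyp{}type invariance argument built on the weak Lyapunov function $W(x,z) = V(x) + \frac{\beta}{4}\|z - \M(x)\|^2_Y$ already introduced in the proof of \cref{claim:LES}. The starting point is the dissipation estimate \cref{eq:dec-W}, which gives $\dot W \leq -\frac{\beta}{2}\|u\|^2_U \leq 0$ along all solutions. From this I would first record three facts: the sublevel sets $\mathcal{N}_c$ are bounded and positively invariant (using that $V$ is quadratic\hyp{}like and $\M$ maps bounded sets to bounded sets), so every solution is global and bounded; the map $t \mapsto W(x(t),z(t))$ is nonincreasing and hence converges to some limit $W_\infty \geq 0$; and integrating the dissipation yields $u \in L^2(\R^+; U)$. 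Since global Lyapunov stability of the origin was already established in \cref{claim:LES} via the $\mathcal{N}_c$ construction, this reduces the theorem to proving \emph{global attractivity}.

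The next step is to show that $x(t) \to 0$. Because the orbit is bounded, I would fix a bounded open set $\B \subset X$ containing the whole $x$\hyp{}trajectory and invoke the strict local ISS estimate \cref{eq:ISS}, which provides a quadratic\hyp{}like $V_\B$ with $\dot V_\B \leq -\alpha_\B V_\B + \beta_\B \|u\|^2_U$ as long as $x$ stays in $\B$. Gr\"onwall's lemma then gives $V_\B(x(t)) \leq e^{-\alpha_\B t} V_\B(x_0) + \beta_\B \int_0^t e^{-\alpha_\B(t-s)} \|u(s)\|^2_U \, \d s$, and splitting the convolution at $t/2$ — the first piece being controlled by $e^{-\alpha_\B t/2}\|u\|^2_{L^2}$ and the second by the tail $\int_{t/2}^\infty \|u(s)\|^2_U \, \d s$ — forces $V_\B(x(t)) \to 0$, hence $\|x(t)\|_X \to 0$. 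This is precisely where the exponential stability hypothesis \ref{it:L2-adm} enters as the structural feature guaranteeing the strict decay $\alpha_\B > 0$, and as a by\hyp{}product the $x$\hyp{}orbit becomes precompact. The finite\hyp{}dimensionality of $Y$ (hypothesis (ii)) plays the analogous role for the $z$\hyp{}component, whose boundedness then yields precompactness for free and legitimizes passing to limits.

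It then remains to propagate the convergence to the $z$\hyp{}coordinate. Writing $\zeta \triangleq z - \M(x)$ and reusing the computation from \crefrange{eq:derivative-W}{eq:dec-W}, I would obtain the closed $\zeta$\hyp{}dynamics $\dot \zeta = -\d \M(x) g(x)\,(\d \M(x) g(x))^* \zeta$. Since $x(t) \to 0$, there is $T$ beyond which $x(t) \in \Op$, so the local coercivity \cref{eq:local-coerc} applies and yields $\frac{\d}{\d t}\|\zeta\|^2_Y \leq -\lambda \|\zeta\|^2_Y$ for $t \geq T$; thus $\zeta(t) \to 0$, and because $\M$ is continuous with $\M(0) = 0$ we get $z(t) = \zeta(t) + \M(x(t)) \to 0$. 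Combining global attractivity with the global Lyapunov stability inherited from \cref{claim:LES} completes the proof of global asymptotic stability.

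The main obstacle I anticipate is the $x(t) \to 0$ step in the infinite\hyp{}dimensional setting: the semilinear term $f(x)$ prevents a naive variation\hyp{}of\hyp{}constants estimate from closing, which is exactly why I would route the argument through the strict ISS functional $V_\B$ rather than the semigroup bound directly. A secondary point requiring care is rigor for mild (as opposed to classical) solutions, since the differential inequalities above are formal; following the Remark after \cref{claim:wp}, these are justified by carrying out the computations on classical solutions and passing to the limit using the continuous dependence on initial data stated in \cref{claim:wp}.
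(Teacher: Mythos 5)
Your proof is correct, but it takes a genuinely different route from the paper's on both convergence claims, and the difference is substantive. The two arguments coincide at the start: both use $W$, the dissipation \cref{eq:dec-W}, boundedness of the trajectory, $u \in L^2(\R^+; U)$, and the fact that Lyapunov stability was already secured by the $\mathcal{N}_c$ construction in \cref{claim:LES}. They then diverge. For $x(t) \to 0$, the paper integrates the strict inequality \cref{eq:Wb-bis} to get $x \in L^2(\R^+; X)$ as well, writes $\dot{x} = Ax + \Xi$ with $\Xi \triangleq f(x) + g(x)u \in L^2(\R^+; X)$, and invokes exponential stability of $\{e^{tA}\}_{t \geq 0}$ (hypothesis (i)) through \cite[Lemma 5.2.2]{CurZwa20book}; your Gr\"onwall/convolution-splitting argument on $V_\B$ reaches the same conclusion from \cref{eq:ISS} and $u \in L^2$ alone. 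For $z(t) \to 0$, the paper uses finite-dimensionality of $Y$ (hypothesis (ii)) to get precompactness of the $z$-orbit and runs a LaSalle invariance argument: the $\omega$-limit set is invariant and contained in $\{0\} \times Y$, the function $W_\B$ is constant along solutions inside it, forcing $\tilde{x} \equiv 0$ and $\tilde{u} \equiv 0$ there, and then injectivity of $g(0)^* \d \M(0)^*$ (dual to \cref{eq:range-cond}) collapses the limit set to $\{0\}$. You instead obtain $z \to 0$ directly: once $x(t) \in \Op$, the coercivity \cref{eq:local-coerc} gives $\frac{\d}{\d t}\|\zeta\|^2_Y \leq -\lambda \|\zeta\|^2_Y$ for $\zeta = z - \M(x)$, with no compactness and no invariance principle. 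Your route is more elementary and in fact proves a stronger statement, since it never actually uses hypotheses (i) or (ii).

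Two points should be flagged. First, your commentary misattributes the role of both hypotheses: the strict decay rate $\alpha_\B > 0$ is supplied by \cref{as:ISS}, not by exponential stability of $\{e^{tA}\}_{t\geq 0}$, and precompactness of the $z$-orbit is nowhere needed in your argument. This is not a gap (proving the theorem under weaker hypotheses is fine), but you should recognize that your proof dispenses with (i)--(ii) entirely, whereas the paper's proof genuinely consumes them. Second, your displayed $\zeta$-dynamics is wrong as stated: combining \cref{eq:forwarding} with the feedback \cref{eq:forwarding-feedback} gives $\dot{\zeta} = S\zeta - \d\M(x) g(x) (\d\M(x) g(x))^* \zeta$, so the term $S\zeta$ is missing; the omission is harmless for your norm estimate because $\langle \zeta, S\zeta \rangle_Y = 0$ by skew-adjointness of $S$, but the equation you wrote only holds when $S = 0$ and the cancellation should be made explicit.
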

\begin{proof}
Let the initial data $[x_0, z_0] \in X \times Y$ be fixed. If $W(x_0, z_0) = 0$, then $x_0 = 0$ and $z_0 = 0$. Otherwise, it follows again from \cref{eq:dec-W} that the $x$-coordinate of the closed-loop solution originating from $[x_0, z_0]$ remains in an open bounded set $\B$ of the form $\B = \{ x \in X : V(x) < 2W(x_0, z_0) \}$. Thus, by \cref{as:ISS}, there exists a Lyapunov functional $V_\B$ and positive constants $\alpha_\B, \beta_\B$ such that $W_\B$ constructed just as in \cref{eq:Wb} satisfies
\begin{equation}
\label{eq:Wb-bis}
\dot{W}_\B \leq - \alpha_\B V_\B - \frac{\beta_\B}{2} \|u\|^2_U
\end{equation}
along the closed-loop solution originating from $[x_0, z_0]$. Integrating \cref{eq:Wb-bis} over $(0, + \infty)$ yields
\begin{equation}
\label{eq:Wb-integrated}
\int_0^{+\infty} \alpha_\B V_\B(x(t)) + \frac{\beta_\B}{2} \|u(t)\|^2_U \, \d t \leq 2 W_\B(x_0, z_0).
\end{equation}
In particular,  $x \in L^2(0, + \infty; X)$ and $u \in L^2(0, + \infty; U)$. Because $x$ is bounded in $X$ and $f$ is locally Lipschitz continuous, this also implies that $f(x) \in L^2(0, + \infty; X)$. On the other hand, because $g$ is locally Lipschitz continuous as well and $u \in L^2(0, + \infty; U)$, we must have $g(x)u \in L^2(0, +\infty; X)$. Let $\Xi \triangleq f(x) + g(x) u$. Then $\Xi \in L^2(0, + \infty; X)$ and $x$ solves the Cauchy problem
$
\dot{x} = Ax + \Xi$, $x(0) = x_0$.
Since $\{e^{tA}\}_{t \geq 0}$ is exponentially stable, it follows from \cite[Lemma 5.2.2]{CurZwa20book} that $x(t) \to 0$ in $X$ as $t \to + \infty$.
Another consequence of \cref{eq:dec-W} is that $z$ remains bounded in $Y$, which is assumed to be finite-dimensional; hence relative compactness of $\{z(t), t \geq 0 \}$ in $Y$. We are now in the position to carry out a standard LaSalle invariance argument \cite{Las60,Daf78}. Indeed,
we now know that 
the $\omega$-limit set
$
\omega(x_0, z_0)$  of the singleton $[x_0, z_0]$ under the flow of \cref{eq:cascade-x}-\cref{eq:forwarding-feedback} is nonempty; it is in fact included in $\{0\} \times Y$ and thus compact, implying
that, as $t \to + \infty$,
\begin{equation}
\label{eq:omega-att}
\dist( [x(t), z(t)], \omega(x_0, z_0)) \to 0.
\end{equation}
Given $[\tilde{x}_0, \tilde{z}_0] \in \omega(x_0, z_0)$, let $[\tilde{x}, \tilde{z}]$ be the closed-loop solution originating from $[\tilde{x}_0, \tilde{z}_0]$. Since the $W$-sublevel sets are all closed and invariant under \cref{eq:cascade-x}-\cref{eq:forwarding-feedback}, $V(\tilde{x}) \leq W(x_0, z_0) < 2 W(x_0, z_0)$ for all $t\geq 0$; thus,  \cref{eq:Wb-bis} is valid for $[\tilde{x}, \tilde{z}]$ as well.
By (strict) invariance of $\omega(x_0, z_0)$, the continuous, monotone decreasing and lower-bounded function $t \mapsto W_\B(\tilde{x}(t), \tilde{z}(t))$ must in fact be constant. We then infer from the $[\tilde{x}, \tilde{z}]$-version of \cref{eq:Wb-bis} that 
\begin{equation}
\label{eq:lassalle-zero}
\tilde{x}(t) = 0, \quad \tilde{u}(t) = 0, \quad \forall t \geq 0,
\end{equation}
where $\tilde{u}$ is the control for $[\tilde{x}, \tilde{z}]$. \Cref{eq:lassalle-zero} leads to
\begin{equation}
g(0)^* \d \M(0)^* \tilde{z}(t) = 0, \quad t \geq 0,
\end{equation}
where we recall that $\M(0) = 0$. Because $ \d \M(0)g(0)$ is assumed to be surjective, $g(0)^* \d \M(0)^*$ is injective and we finally obtain
$\omega(x_0, z_0) = \{0\}
$. By \cref{eq:omega-att}, this shows  that $0$ is globally attractive for the closed-loop \cref{eq:cascade-x}-\cref{eq:forwarding-feedback}. 
\end{proof}



\section{Solving the nonlinear Sylvester equation}\label{sec:syl}

In this section, we establish sufficient conditions under which a suitable solution $\M$ to \cref{eq:forwarding} exists. Consider the uncontrolled $x$-equation:
\begin{equation}
\label{eq:unc-x}
\dot{x} = Ax + f(x).
\end{equation}
It is clear from the arguments in the proof of \cref{claim:wp} that \cref{eq:unc-x} gives rise to a dynamical system in $X$, with similar regularity and uniform approximation properties. We denote by $\{\T_t\}_{t \geq 0}$ the associated evolution semigroup: $t \mapsto \T_t x_0$ is the unique solution $x$ to \cref{eq:unc-x} with initial condition $x(0) = x_0$. Let us now discuss the consequences of \cref{as:ISS} (with $u = 0$) on the stability of \cref{eq:unc-x}. The positive invariance of each $V$-sublevel set under the flow of \cref{eq:unc-x} combined with the existence of a strict Lyapunov functional $V_\B$ on each bounded set $\B$ of $X$ imply \emph{semiglobal} exponential stability: for each bounded set $\B$ there exist $M_\B\geq 1$ and $\mu_\B > 0$ such that
\begin{equation}
\label{eq:stab-T}
\|\T_t x_0 \|_X \leq M_{\B} e^{- \mu_\B t} \|x_0\|_X, \quad \forall x_0 \in \B, \forall t \geq 0.
\end{equation}
In particular, the zero equilibrium \emph{uniformly} attracts the bounded sets of $X$, i.e.,  for any $\eps$-ball, $\eps > 0$, around the origin and any bounded set $\B$, there exists a time $T$ after which all solutions originating from $\B$ remain in that $\eps$-ball.
The following additional assumptions are now in force.

\begin{as}
\label{as:smooth}
The maps $f$ are Fr\'echet differentiable with locally Lipschitz continuous differentials. Also, without loss of generality, $\d f(0) = 0$ and $\d h(0) = 0$.
\end{as}

\begin{as}
\label{as:first-var}
There exist a coercive self-adjoint operator $P \in \L(X)$ and a positive constant $\mu$ such that 
\begin{equation}
\label{eq:ineq-first-var}
\langle Ax, P x \rangle_X \leq - \mu \|x\|^2_X, \quad \forall x \in \D(A).
\end{equation}
\end{as}
\Cref{as:first-var} implies that  $\{e^{tA}\}_{t \geq 0}$ 
is exponentially stable with 
a coercive quadratic Lyapunov functional. 
\begin{theo}[Existence of $\M$]
\label{claim:sylvester}
Let $\M_0 \in \L(X, Y)$ be the (unique) solution to the linear Sylvester equation
\begin{equation}
\label{eq:sylvester-lin}
\M_0 A = S \M_0 + C.
\end{equation}
Then, the unique solution $\M$ to \cref{eq:forwarding}, $\M(0) = 0$, is given by
\begin{equation}
\label{eq:M-nonlin}
\M(x) = \M_0x + \int_0^{+ \infty} e^{-t S} [ \M_0 f(\T_t x) - h(\T_t x)] \, \d t.
\end{equation}
Furthermore, $\M \in \C^1(X, Y)$ and $\d \M$ is locally Lipschitz continuous.
\end{theo}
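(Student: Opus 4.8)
The plan is to peel off the linear part, reduce \cref{eq:forwarding} to an equation with a genuinely bounded nonlinearity, establish the regularity of the resulting integral operator through the linearized flow, and finally verify the identity and uniqueness by differentiating along $\{\T_t\}_{t\ge 0}$. I take the linear solution $\M_0$ as given (its existence and uniqueness are classical consequences of the spectral separation between the exponentially stable $A$ and the skew-adjoint $S$; formally $\M_0 = -\int_0^{+\infty} e^{-tS} C e^{tA}\,\d t$). Writing $\M(x) = \M_0 x + \M_1(x)$, substituting into \cref{eq:forwarding}, and cancelling the unbounded term through the linear identity $\M_0 A x = S\M_0 x + C x$, one finds that $\M_1$ must solve $\d\M_1(x)(A+f)(x) = S\M_1(x) - \eta(x)$ on $\D(A)$, where $\eta \triangleq \M_0 f - h$. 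The gain is that $\eta$ is now a bounded, locally Lipschitz, $\C^1$ map with $\eta(0)=0$ and, by \cref{as:smooth}, $\d\eta(0)=0$. The natural candidate is $\M_1(x)=\int_0^{+\infty} e^{-tS}\eta(\T_t x)\,\d t$, which is exactly the nonlinear part of \cref{eq:M-nonlin}.

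\emph{Well-posedness of the formula.} Convergence of the integral follows from the semiglobal exponential stability \cref{eq:stab-T}: on a bounded set $\B$ the orbit stays in a fixed bounded set and $\|\T_t x\|_X \le M_\B e^{-\mu_\B t}\|x\|_X$, so $\|\eta(\T_t x)\|_Y \lesssim \|\T_t x\|_X$ decays exponentially; since $S$ is skew-adjoint, $e^{-tS}$ is an isometry and the integrand is dominated by $C e^{-\mu_\B t}\|x\|_X$. Hence $\M_1$ is well-defined, with $\M_1(0)=0$ and a local linear bound.

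\emph{Regularity (the crux).} The core of the proof is to show $\M_1\in\C^1(X,Y)$ with $\d\M_1$ locally Lipschitz by differentiating under the integral sign. For this I would use that the mild-solution flow $x\mapsto\T_t x$ is Fr\'echet differentiable, with derivative $\Phi(t,x)\triangleq\d_x\T_t x \in\L(X)$ given by the variational equation $\dot\Phi = (A+\d f(\T_t x))\Phi$, $\Phi(0,x)=\id$ (standard for $\C^1$ semilinear equations, cf. \cite{CurZwa20book}). The decisive estimate, and the place where \cref{as:first-var,as:smooth} combine, is a uniform exponential bound $\|\Phi(t,x)\|_{\L(X)} \le K_\B e^{-\mu t}$ over bounded sets: testing the variational equation with $P$ and using \cref{eq:ineq-first-var} gives $\tfrac{\d}{\d t}\langle P\xi,\xi\rangle_X \le 2(-\mu + \|P\|\,\|\d f(\T_t x)\|)\|\xi\|_X^2$, and since $\d f(0)=0$ with $\d f$ locally Lipschitz, $\int_0^{+\infty}\|\d f(\T_t x)\|\,\d t<+\infty$ along each orbit, so Gr\"onwall forces decay at the rate $\mu$. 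Consequently $\d\eta(\T_t x)\Phi(t,x)$ is integrable (two exponentially decaying factors), legitimizing $\d\M_1(x)=\int_0^{+\infty} e^{-tS}\d\eta(\T_t x)\Phi(t,x)\,\d t$ by dominated convergence. Local Lipschitz continuity of $\d\M_1$ then reduces to Lipschitz-in-$x$ bounds for $\T_t x$ (namely $\|\T_t x_1 - \T_t x_2\|_X \le K_\B e^{-\mu t}\|x_1-x_2\|_X$, from the mean value inequality and the bound on $\Phi$) and for $\Phi(t,x)$ itself; the latter solves an inhomogeneous variational equation whose Duhamel representation, together with the decay of the associated evolution family and the local Lipschitzness of $\d f$, yields $\|\Phi(t,x_1)-\Phi(t,x_2)\|_{\L(X)}\le C_\B e^{-\mu t}\|x_1-x_2\|_X$. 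Integrating these estimates against the exponential weights gives $\|\d\M_1(x_1)-\d\M_1(x_2)\|\le C_\B\|x_1-x_2\|_X$. This regularity step — specifically the uniform bounds on $\Phi$ and its Lipschitz dependence on the base point — is where I expect the genuine difficulty.

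\emph{Verification and uniqueness.} With $\M_1\in\C^1$ available, I verify the reduced equation using the semigroup property: from $\T_t\T_\tau=\T_{t+\tau}$ one obtains $e^{-\tau S}\M_1(\T_\tau x)=\M_1(x)-\int_0^\tau e^{-sS}\eta(\T_s x)\,\d s$. For $x\in\D(A)$ the orbit $\tau\mapsto\T_\tau x$ is a classical solution, $\C^1$ into $X$ with derivative $(A+f)(\T_\tau x)$; differentiating at $\tau=0$ and applying the chain rule on the left gives $-S\M_1(x)+\d\M_1(x)(A+f)(x)=-\eta(x)$, which is precisely the equation for $\M_1$, and adding back the linear identity for $\M_0$ recovers \cref{eq:forwarding}. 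For uniqueness, if $D\triangleq\M-\tilde\M$ is the difference of two $\C^1$ solutions vanishing at the origin, then $\d D(x)(A+f)(x)=S D(x)$ on $\D(A)$, whence $\tfrac{\d}{\d t}[e^{-tS}D(\T_t x)]=0$; letting $t\to+\infty$ and using $\T_t x\to 0$, $D(0)=0$ and the isometry of $e^{-tS}$ forces $D(x)=0$, first on $\D(A)$ and then on $X$ by density and continuity.
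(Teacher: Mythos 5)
Your proposal is correct and takes essentially the same route as the paper: the same splitting $\M(x) = \M_0 x + \F(x)$, the same reduction to a transport identity along $\{\T_t\}_{t \geq 0}$, the same integral formula with absolute convergence from semiglobal exponential stability \cref{eq:stab-T}, the same verification by differentiating the semigroup identity at $t = 0$ for $x \in \D(A)$, and the same uniqueness argument (integrate along orbits and use $\T_t x \to 0$ with the isometry of $e^{-tS}$). The only divergence is bookkeeping in the regularity step, which the paper itself only sketches in its appendix by adapting \cite{VanBri23}: you derive uniform exponential bounds on the linearized flow by a global-in-time Gr\"onwall estimate exploiting integrability of $t \mapsto \|\d f(\T_t x)\|_{\L(X)}$ along orbits, whereas the paper waits a finite time $T_\B$ for orbits to enter a fixed contraction neighborhood of the origin and splits the integral there; both variants rest on exactly the same ingredients, namely \cref{as:smooth,as:first-var} and \cref{eq:stab-T}, followed by differentiation under the integral sign via dominated convergence.
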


\begin{proof}
First, 
since $\{e^{tA}\}_{t \geq 0}$ is exponentially stable, $0$ lies in the resolvent set of $A$.  On the other hand, $S$ is skew-adjoint and bounded, and thus generates a (uniformly continuous) group $\{e^{tS}\}_{t \in \R}$ of isometries on $Y$. 
With that in mind, it can be checked by following the proof of \cite[Lemma III.4]{NatGil14} or using \cite[Theorem 2.1]{Ngu01} that the unique solution $\M_0 \in \L(X, Y)$ to \cref{eq:sylvester-lin} is given by
\begin{equation}
\M_0x = CA^{-1}x - \int_0^{+ \infty} S e^{-tS} C A^{-1}e^{tA}x \, \d t
\end{equation}
for all $x \in X$. 
Now we look for a Fr\'echet differentiable solution $\M$ to \cref{eq:forwarding} of the form $\M(x) = \M_0 x + \F(x)$. Since $\M_0$ solves \cref{eq:sylvester-lin}, such a map $\M$ satisfies \cref{eq:forwarding} if and only if
\begin{equation}
\M_0 f(x) + \d \F(x) (A + f)(x) = S \F(x) + h(x)
\end{equation}
for all $x \in \D(A)$, or equivalently,
\begin{equation}
\label{eq:calcul-dF-T}
\d \F(\T_t x) \dt{} \T_t x - S \F (\T_t x)  = - \M_0 f(\T_t x) +  h(\T_tx), \quad \forall x \in \D(A), \forall t \geq 0.
\end{equation}
By applying (the invertible operator) $e^{-tS}$ to \cref{eq:calcul-dF-T}, we see that $\M$ solves \cref{eq:forwarding} if and only if for all $x \in \D(A)$ and $t \geq 0$,
\begin{equation}
\label{eq:calcul-dF-final}
\dt{} e^{-t S} \F(\T_t x) = - e^{-t S} [ \M_0 f(\T_t x) -  h(\T_t x)].
\end{equation}
Since $\M$ (and thus $\F$) is assumed to be continuous and vanish at $0$, we can integrate \cref{eq:calcul-dF-final} and obtain that if $\M$ is indeed a solution to \cref{eq:forwarding}, it must be given by \cref{eq:M-nonlin}. Indeed, recall here that $\M_0$ is unique and the integral in \cref{eq:M-nonlin} is absolutely convergent because of \cref{eq:stab-T} together with the property that $f$ and $h$ are linearly bounded on bounded sets.
Conversely, we have to prove that the map $\M$ defined by \cref{eq:M-nonlin} for any $x \in X$ is Fr\'echet differentiable and solves \cref{eq:forwarding} or, equivalently, satisfies \cref{eq:calcul-dF-final} for all $x \in \D(A)$ and $t \geq 0$, where we let $\F \triangleq \M - \M_0$. Assume for the moment that $\M$ is Fr\'echet differentiable and let $x \in \D(A)$. 
For all $t \geq 0$,
\begin{equation}
\label{eq:diff-F}
e^{-t S} \F(\T_t x) - \F(x)  = -\int_0^{t} e^{-s S} [ \M_0 f(\T_s x) -  h(\T_s x)] \, \d s.
\end{equation}
Dividing \cref{eq:diff-F} by $t > 0$ and letting $t \to 0$ yield
\begin{equation}
\dt{} e^{-tS} \F(\T_t x) \bigg|_{t = 0} = h(x) - \M_0 f( x),
\end{equation}
which implies \cref{eq:calcul-dF-T}. The differentiability of $\M$ (along with the Lipschitz continuity of the differential) is a   extension of the (lengthy) proof of \cite[Theorem 3.4]{VanBri23}. It is omitted here.\footnote{See \cref{sec:add-proof} for a sketch of the proof.}
In the sequel, we shall use that $\d \M(0) = \M_0$, which follows from \cref{eq:M-nonlin}.
\end{proof}


We now know that \cref{as:smooth,as:first-var} imply \cref{as:forwarding} and the solution $\M$ to \cref{eq:forwarding} is unique. As a result, we can reformulate the hypothesis \cref{eq:range-cond} of \cref{claim:LES} in terms of the original control system only and, in the case $S = 0$, recover the classical non-resonance condition \cite{Isi03book}.
\begin{coro}[Non-resonance condition]
The range condition \cref{eq:range-cond} 
reads as
$\range \M_0g(0) = Y$,
where $\M_0$ is the unique solution to the Sylvester equation \cref{eq:sylvester-lin}. In particular, if $S = 0$, \cref{eq:range-cond} reads as follows:
\begin{equation}
\label{eq:non-resonance}
\range CA^{-1}g(0) = Y,
\end{equation}
\end{coro}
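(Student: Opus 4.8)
The plan is to read off both assertions directly, since they are immediate consequences of \cref{claim:sylvester} and the linear Sylvester equation \cref{eq:sylvester-lin}; no new analysis is required. The only ingredients are the identity $\d \M(0) = \M_0$ recorded at the end of the proof of \cref{claim:sylvester}, and the explicit reduction of $\M_0$ when $S = 0$.

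First I would invoke $\d \M(0) = \M_0$. Because $g(0) \in \L(U, X)$ is a fixed bounded operator, composing this identity on the right gives $\d \M(0) g(0) = \M_0 g(0)$ as operators, so their ranges coincide. This turns the abstract range condition $\range \d \M(0) g(0) = Y$ of \cref{claim:LES} into the equivalent condition $\range \M_0 g(0) = Y$, now expressed purely through the original data: $\M_0$ is the unique solution to \cref{eq:sylvester-lin} and is therefore determined by $A$, $S$, and $C$ alone.

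For the specialization $S = 0$, I would return to \cref{eq:sylvester-lin}, which then reads $\M_0 A = C$ on $\D(A)$. Under \cref{as:first-var} the semigroup $\{e^{tA}\}_{t \geq 0}$ is exponentially stable, so $0$ lies in the resolvent set of $A$ and $A^{-1} \in \L(X, \D(A))$ exists; applying it on the right yields $\M_0 = C A^{-1}$. Equivalently, one reads this off the explicit formula for $\M_0$ displayed in the proof of \cref{claim:sylvester}, where the integral term carries a factor $S$ and hence vanishes identically when $S = 0$. Substituting $\M_0 = C A^{-1}$ into the previous paragraph gives $\range C A^{-1} g(0) = Y$, which is \cref{eq:non-resonance}.

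There is no genuine obstacle here: the statement is a direct corollary, and the only points requiring care are citing $\d \M(0) = \M_0$ rather than recomputing the differential, and justifying the invertibility of $A$ before writing $A^{-1}$. The label ``non-resonance'' is then justified by comparison with the finite-dimensional theory of \cite{Isi03book}, where surjectivity of $C A^{-1} g(0)$ is precisely the condition guaranteeing that the steady-state map does not annihilate the available control directions.
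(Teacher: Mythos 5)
Your proposal is correct and follows essentially the same route as the paper, whose proof simply cites the uniqueness of $\M$ and the identity $\d \M(0) = \M_0$ established in \cref{claim:sylvester}. Your additional details (composing with $g(0)$ to identify the ranges, and reducing $\M_0$ to $CA^{-1}$ when $S = 0$ via the invertibility of $A$ or the explicit formula) are exactly the computations the paper leaves implicit.
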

\begin{proof}
This is a consequence of the uniqueness of $\M$ and the property that $\d \M(0) = \M_0$.
\end{proof}

\section{Local set-point output regulation}\label{sec:reg}

In this section, we present an application of the forwarding approach for stabilization of cascade systems in the context of set-point output regulation. Let $\yr \in Y$ be a (small) deviation from the output $y = Cx + h(x)$ at the equilibrium (here, the origin). We wish to find a control $u$ steering $y$ to $\yr$ while maintaining $x$ bounded.
In the spirit of \cite{mattia-forwarding,VanBri23}, consider in place of \cref{eq:z-sub}
\begin{equation}
\label{eq:z-sub-int}
\dot{z} = Cx + h(x) - \yr.
\end{equation}
The crucial property of integral action is that at \emph{any} equilibrium, the output must be at the desired value $\yr$.
We assume that \cref{as:ISS,as:smooth,as:first-var} are satisfied, which in turn means that \cref{as:forwarding} is satisfied as well by \cref{claim:sylvester}. In particular, the unique solution $\M$ to \cref{eq:forwarding}, $\M(0) = 0$ is given by \cref{eq:M-nonlin}  -- here, $S = 0$, and thus $\M_0 = CA^{-1}$. We thus consider the nonlinear feedback  \cref{eq:forwarding-feedback} of the state $[x, z]$ governed by \cref{eq:cascade-x}-\cref{eq:z-sub-int}.

\begin{theo}[Set-point output regulation]
\label{th:set-point}
Suppose that
\begin{equation}
\range CA^{-1}g(0) = Y.
\end{equation}
Then, there exists $r > 0$ such that for any $\yr \in \B_Y(0, r)$, the following property holds: the $x$-subsystem \cref{eq:cascade-x} supplemented with the output integrator \cref{eq:z-sub-int} and in closed loop with \cref{eq:forwarding-feedback} possesses an equilibrium $[x^\star, z^\star] \in \D(A) \times Y$ that is Locally Exponentially Stable, satisfies $(C + h)(x^\star) = \yr$ and whose basin of attraction contains the origin.
\end{theo}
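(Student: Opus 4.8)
The plan is to first locate the regulating equilibrium by a fixed-point argument and then prove its local exponential stability by a Lyapunov estimate mirroring the proof of \cref{claim:LES}; the basin claim will be read off from the size of the associated sublevel set.

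First I would characterise the equilibria. Introducing the feedback error $\zeta \triangleq z - \M(x)$ and using $S = 0$ together with the forwarding identity \cref{eq:forwarding}, one finds along \cref{eq:cascade-x} (with \cref{eq:z-sub-int}) in closed loop with \cref{eq:forwarding-feedback} that $\dot\zeta = -\yr - \d\M(x)g(x)u$, where $u = g(x)^*\d\M(x)^*\zeta$. Hence $[x^\star, z^\star]$ is an equilibrium if and only if the pair $(x^\star, \zeta^\star) \triangleq (x^\star, z^\star - \M(x^\star)) \in \D(A) \times Y$ solves $G(x^\star, \zeta^\star; \yr) = 0$, where
\begin{equation*}
G(x, \zeta; \yr) \triangleq \begin{bmatrix} Ax + f(x) + g(x)g(x)^*\d\M(x)^* \zeta \\ \d\M(x) g(x) g(x)^* \d\M(x)^* \zeta + \yr \end{bmatrix}.
\end{equation*}
The origin solves $G(\cdot\,;0) = 0$. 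Moreover, once such a solution is found, writing $u^\star \triangleq g(x^\star)^*\d\M(x^\star)^*\zeta^\star$, the forwarding identity \cref{eq:forwarding} at $x^\star \in \D(A)$ combined with the two components of $G = 0$ gives $(C + h)(x^\star) = \d\M(x^\star)(A + f)(x^\star) = -\d\M(x^\star) g(x^\star) u^\star = \yr$, which is precisely the regulation property.

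Next I would obtain the equilibrium by the implicit function theorem applied to $G$ at $(0,0;0)$, viewed between the Banach spaces $\D(A) \times Y$ and $X \times Y$. Since $\d f(0) = 0$ and the $x$-dependent coefficient multiplies $\zeta$, which vanishes at the base point, $G$ is Fr\'echet differentiable there with
\begin{equation*}
\partial_{(x, \zeta)} G(0,0) = \begin{bmatrix} A & g(0) g(0)^* \M_0^* \\ 0 & \M_0 g(0) (\M_0 g(0))^* \end{bmatrix}, \qquad \M_0 = CA^{-1}.
\end{equation*}
This block upper-triangular operator is boundedly invertible: the diagonal block $A \colon \D(A) \to X$ is invertible because $\{e^{tA}\}_{t \geq 0}$ is exponentially stable (\cref{as:first-var}), and $\M_0 g(0)(\M_0 g(0))^*$ is coercive on $Y$ precisely because $\range \M_0 g(0) = \range CA^{-1} g(0) = Y$, by the same transposition argument as in \cref{claim:LES}. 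Because $g$ and $\d\M$ are only locally Lipschitz (not $\C^1$), I would use the quantitative, Newton-type form of the theorem: the map $(x,\zeta) \mapsto (x,\zeta) - \partial_{(x,\zeta)}G(0,0)^{-1} G(x,\zeta;\yr)$ is a contraction of a small ball into itself for $\|\yr\|_Y$ small, since the nonlinear remainder $G - \partial_{(x,\zeta)}G(0,0)[\,\cdot\,]$ is Lipschitz with arbitrarily small constant near the origin (again using $\d f(0)=0$ and local Lipschitz continuity of $x \mapsto g(x)g(x)^*\d\M(x)^*$). This yields $r > 0$ and a branch $\yr \mapsto (x^\star, \zeta^\star) \in \D(A) \times Y$ that is continuous and vanishes as $\yr \to 0$; setting $z^\star = \zeta^\star + \M(x^\star)$ gives the sought equilibrium.

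To prove local exponential stability and the basin property, I would translate the equilibrium to the origin via $\xi \triangleq x - x^\star$, $\nu \triangleq \zeta - \zeta^\star$ and build $\hat W(\xi, \nu) \triangleq \langle \xi, P\xi \rangle_X + \kappa \|\nu\|^2_Y$ with the coercive $P$ of \cref{as:first-var} and $\kappa > 0$ small. The decisive point is that inequality \cref{eq:ineq-first-var} is a property of $A$ alone, hence still available around the shifted equilibrium, and that the coercivity \cref{eq:local-coerc} holds on the fixed ball $\Op$ independent of $\yr$. Differentiating $\hat W$ along the closed loop, the $-\mu\|\xi\|^2_X$ term from $P$ and the $-\frac{\lambda}{2}\|\nu\|^2_Y$-type term from \cref{eq:local-coerc} provide the dominant negativity, while the cross-terms coupling $\xi$ and $\nu$ through $u - u^\star$ are either quadratically small (they carry a factor $\|\xi\|_X$ or the small factor $\|\zeta^\star\|_Y$, itself of order $\|\yr\|_Y$) or absorbed by Young's inequality. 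This gives $\dot{\hat W} \leq - c\, \hat W$ on a sublevel set $\{\hat W \leq c_0\}$, with $c, c_0 > 0$ \emph{uniform} in small $\yr$; local exponential stability then follows by Gr\"onwall as in \crefrange{eq:Wb-strict}{eq:dec-W}, after noting that $\hat W$ is equivalent to $\|x - x^\star\|^2_X + \|z - z^\star\|^2_Y$. Finally, since $c_0$ is uniform whereas $\hat W$ evaluated at the origin is of order $\|x^\star\|^2_X + \|\zeta^\star\|^2_Y$, hence of order $\|\yr\|^2_Y$, shrinking $r$ guarantees that the origin lies in the invariant sublevel set $\{\hat W \leq c_0\}$, and hence in the basin of attraction.

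I expect the last step to be the main obstacle. Because $g$ and $\d\M$ are merely locally Lipschitz, no linearisation of the closed loop at the (nonzero) equilibrium is available, so exponential decay must be extracted by a hands-on Lyapunov estimate whose many cross-terms have to be dominated; and for the basin claim every constant in that estimate must be controlled uniformly as $\yr \to 0$, which is exactly what forces the use of the $\yr$-independent objects $P$ and $\Op$ rather than the equilibrium-dependent functionals $V_\B$ of \cref{as:ISS}.
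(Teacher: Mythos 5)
Your proposal is sound in outline and reaches the theorem, but it takes a genuinely different route from the paper on the key existence step. The paper gives no self-contained argument: it works in the same error coordinates $[x, \eta] = [x, z - \M(x)]$ and imports the five-step proof scheme of \cite[Section 4.2.1]{VanBri23}, recording in \cref{sec:add-proof} only the modifications --- \cref{eq:local-coerc} for the coercivity step, the localized incremental estimate \cref{eq:new-first-var} for the contraction step, and the Lyapunov computation \crefrange{eq:Wb}{eq:Wb-strict} redone in the presence of $\yr$. In that contraction-based scheme the equilibrium is obtained \emph{dynamically}, as the fixed point furnished by the locally contracting closed-loop semiflow on a positively invariant neighborhood of the origin, and its exponential stability and the basin property come bundled with the same contraction/Lyapunov estimates. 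You instead produce the equilibrium \emph{statically}, by a Newton-type implicit function theorem applied to the equilibrium equations in $\D(A) \times Y$: the linearization at $(0,0;0)$ is block triangular with diagonal blocks $A$ (invertible by \cref{as:first-var}) and $\M_0 g(0)(\M_0 g(0))^*$ (coercive by the range condition and the transposition argument), and the merely locally Lipschitz coefficients are correctly handled by the small-Lipschitz-remainder fixed-point version of the theorem. That is a legitimate and arguably more elementary device, and it makes transparent that the non-resonance condition is exactly invertibility of the bottom block. (Minor remark: the regulation identity $(C+h)(x^\star) = \yr$ is immediate from $\dot z = 0$ in \cref{eq:z-sub-int}; your detour through \cref{eq:forwarding} is correct but unnecessary.)

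Two details in your stability step need repair, neither fatal. First, the weight in $\hat W(\xi, \nu) = \langle \xi, P \xi\rangle_X + \kappa \|\nu\|^2_Y$ must be taken \emph{large}, not small: the dangerous cross term $2\langle g(x)g(x)^* \d\M(x)^* \nu, P \xi \rangle_X$ yields, after Young's inequality against the $-\mu\|\xi\|^2_X$ term, a contribution $C\|\nu\|^2_Y$ whose constant $C$ depends on $\|P\|$, $\mu$ and $\sup_{\Op}\|g\, g^* \d \M^*\|$ but not on $\kappa$; it can be dominated by the good term $-\kappa \lambda \|\nu\|^2_Y/2$ coming from \cref{eq:local-coerc} only when $\kappa$ exceeds a threshold (equivalently, put the small weight on the $\xi$-block). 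With $\kappa$ small, as written, the estimate does not close. Second, the claim that \cref{eq:ineq-first-var} is ``still available around the shifted equilibrium'' covers only $\langle A\xi, P\xi\rangle_X$; the $\xi$-equation also contains the increment $f(x^\star + \xi) - f(x^\star)$, which is not a cross term in $(\xi,\nu)$ and is not controlled by \cref{eq:ineq-first-var} alone. Dominating $\langle f(x^\star + \xi) - f(x^\star), P\xi\rangle_X$ requires $\d f(0) = 0$ and local Lipschitz continuity of $\d f$ (\cref{as:smooth}) together with $\|x^\star\|_X$ and $\|\xi\|_X$ small --- precisely the content of the paper's estimate \cref{eq:new-first-var}, which is the ingredient you should invoke at this point. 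With these two adjustments your Lyapunov argument closes with constants uniform in small $\yr$, and the basin-of-attraction conclusion follows as you describe.
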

\begin{rem}
That the closed-loop system in presence of the additional term $\yr$ is well-posed and forward complete can be verified by following the proofs of \cref{claim:wp,claim:LES}.
\end{rem}
\Cref{th:set-point} can be proved by following the strategy described in \cite[Section 4.2.1]{VanBri23}.\footnote{
It is however quite lenghty, so we only sketch the differences with respect to \cite[Theorem 3.2]{VanBri23} in \cref{sec:add-proof}.}
\begin{rem}
A similar result can be obtained in presence of small constant disturbance $d$ in the $x$-equation \cref{eq:cascade-x}, as considered in \cite{VanBri23}. However, this requires global Lipschitz continuity of $\M$ and $\d \M$, which is much harder to obtain, and is not guaranteed a priori by \cref{claim:sylvester}.
\end{rem}

\section{Applications to flexible structures}\label{sec:flex}

We consider the planar motion of a flexible homogeneous Euler-Bernoulli beam of length $L$ attached to a rotating joint at one end and free at the other end. The deflection $w(\xi, t)$ in the beam frame at the position $\xi\in[0, L]$ and time $t\in\R^+$ and the rotation angle $\theta(t)$ are governed by the following set of equations:
\begin{subequations}
\label{eq:w-beam}
\begin{align}
&\rho \ddl{w}{t} + \lambda \dl{w}{t}   + EI \ddddl{w}{\xi} + \rho \xi \ddot{\theta} - \rho \dot{\theta}^2 w = 0, \\
&I_R \ddot{\theta}(t) = EI \ddl{w}{\xi}(0, t)  + \tau(t),  \\
& w(0, t) = \dl{w}{\xi}(0, t) = 0, \\
&\dddl{w}{\xi}(L, t) = \ddl{w}{\xi} (L, t) = 0, 
\end{align}
\end{subequations}
where $\lambda$ is a viscous damping coefficient, $E$ is the Young modulus, $I$ and $\rho$ are the moment of inertia and the density of the cross section, $I_R$ is the moment of inertia of the rotating joint, and $\tau$ is the torque applied to the joint, which is our control input. We refer the reader to \cite{Mor91} for more details on that model.
For the sake of simplicity, we set all physical constants to $1$, with the exception of $\lambda$.\footnote{
This is to highlight the fact that, although we take advantage of viscous damping to considerably simplify computations, $\lambda > 0$ can be taken small.}

A possible control objective is reference tracking of the angular position $\theta$.
Let $\theta_\mathrm{ref} \in \R$ be the desired angle.
We start with some important observations. If we set the torque input as
\begin{equation}
\label{eq:torque-stat}
\tau = - \theta + \theta_\mathrm{ref} + \tilde{\tau},
\end{equation}
then, in the new coordinate system $[w, \theta] \mapsto [v, \phi]$ where
\begin{equation}
\phi(t) \triangleq \theta(t) - \theta_{\mathrm{ref}}, \quad v(\xi, t) \triangleq w(\xi,t) + \xi \phi(t),
\end{equation}
the equations of motion \cref{eq:w-beam} supplied with \cref{eq:torque-stat} become:
\begin{subequations}
\label{eq:beam-v}
\begin{align}
&\ddl{v}{t} + \lambda \dl{v}{t} + \ddddl{v}{\xi}   - \lambda \xi \dot{\phi}  - \dot{\phi}^2(v -  \xi \phi) = 0, \\
&\ddot{\phi}(t) = \ddl{v}{\xi}(0, t) - \phi(t) + \tilde{\tau}(t), \\
&\dddl{v}{\xi}(L, t) = \ddl{v}{\xi} (L, t) = v(0, t) = 0, \\
& \dl{v}{\xi}(0, t) = \phi(t).
\end{align}
\end{subequations}
Note in particular that the new equations \cref{eq:beam-v} do \emph{not} depend on the choice of $\theta_\mathrm{ref}$. 
Now, in order to pre-stabilize the plant, an energy approach suggests the following nonlinear feedback in torque:
\begin{equation}
\label{eq:torque}
\tilde{\tau} = - \dot{\phi} \int_\Omega v \dl{v}{t} \,  \d \xi + (\phi \dot{\phi}  - \lambda) \int_\Omega \xi \dl{v}{t} \, \d \xi   - \dot{\phi} + u.
\end{equation}
Indeed, this yields
\begin{equation}
\label{eq:energy-balance}
\frac{1}{2} \dt{} \left (  \int_\Omega \left | \dl{v}{t}  \right |^2 + \left | \ddl{v}{\xi}  \right |^2 \d \xi + |\dot{\phi}|^2 + |\phi|^2 \right )   = - \lambda \int_\Omega \left | \dl{v}{t} \right |^2 \d \xi -  |\dot{\phi}|^2 + u \dot{\phi}
\end{equation}
along trajectories of \cref{eq:beam-v} in closed loop with \cref{eq:torque}.
\begin{rem}
The pre-stabilizer given in \cref{eq:torque} is different from the control laws introduced in \cite{Mor91}.
\end{rem}
Let us introduce an operator model for the control system \cref{eq:beam-v}-\cref{eq:torque}. 
Having set $\Omega \triangleq (0, L)$, we define the spaces
\begin{subequations}
\begin{align}
&H_0 \triangleq L^2(\Omega) \times \R, \\
& H_1 \triangleq \left \{  \begin{bmatrix} v \\ \phi \end{bmatrix}\in H^2(\Omega) \times \R  
\left |
\begin{aligned}
&\dl{v}{\xi}(0) = \phi \\
&v(0) = 0
\end{aligned}
\right.
\right \},
\end{align}
\end{subequations}
which we equip with the following scalar products:
\begin{subequations}
\begin{align}
&\left \langle
\begin{bmatrix}
p_1 
\\ \omega_1
\end{bmatrix},
\begin{bmatrix}
p_2 \\ \omega_2
\end{bmatrix} 
\right \rangle_{H_0} \triangleq \omega_1 \omega_2 +  \int_\Omega p_1 p_2 \, \d \xi, \\
\label{eq:scalar-H1}
&\left \langle
\begin{bmatrix}
v_1 
\\ \phi_1
\end{bmatrix},
\begin{bmatrix}
v_2 \\ \phi_2
\end{bmatrix} 
\right \rangle_{H_1} \triangleq \phi_1 \phi_2 + \int_\Omega \ddl{v_1}{\xi}  \ddl{v_2}{\xi} \, \d \xi.
\end{align}
\end{subequations}
Then, $H_0$ and $H_1$ are Hilbert spaces.\footnote{
One can proceed by showing that $H_1$ is a closed subspace of $H^2(\Omega) \times \R$ and therefore a Hilbert space if equipped with the inherited scalar product. That \cref{eq:scalar-H1} defines an equivalent norm is obtained with standard arguments.}
Now, let
$
X \triangleq H_1 \times H_0
$
equipped with its product Hilbertian structure. We define an unbounded operator $A : \D(A) \to X$ by
\begin{subequations}
\begin{align}
&\D(A) \triangleq \left \{ \begin{bmatrix}
v \\ \phi \\ p \\ \omega
\end{bmatrix} \in H_1 \times H_1 \;
\left |\;
\begin{aligned}
& v \in H^4(\Omega), \\
& \dddl{v}{\xi}(L) = 0, \\& \ddl{v}{\xi} (L) = 0
\end{aligned}
\right.
\right \}, \\
& A \begin{bmatrix}
v \\ \phi \\ p \\ \omega
\end{bmatrix} \triangleq
\begin{bmatrix}
p \\ \omega \\
- \ddddl{v}{\xi} -  \lambda p \\
\ddl{v}{\xi}(0) - \omega -  \phi
\end{bmatrix}, \quad   \forall \begin{bmatrix}
v \\ \phi \\ p \\ \omega
\end{bmatrix} \in \D(A).
\end{align}
\end{subequations}
It follows from the Lumer-Phillips theorem and standard arguments that $A$ is the generator of a contraction semigroup on $X$.
Here, as an input space we let $U \triangleq \R$, and
the control map $g$ is linear and given by $g(x)u = Bu = [0, 0, 0, u]$ for all $x \in X$ and $u \in U$.  We then put all the nonlinear terms from \cref{eq:beam-v}-\cref{eq:torque} into a locally Lipschitz map $f : X \to X$ that meets the requirements of \cref{sec:stab} and also \cref{as:smooth}.

Letting
$
V \triangleq (1/2) \| \cdot \|^2_X 
$,
the energy balance \cref{eq:energy-balance} reads as
\begin{equation}
\label{eq:Vdot-beam}
\dot{V} = - \lambda \int_\Omega \left | \dl{v}{t} \right |^2 \d \xi -  |\dot{\phi}|^2 + u \dot{\phi} \leq \frac{1}{2} |u|^2
\end{equation}
along solutions to \cref{eq:beam-v}-\cref{eq:torque}.
Now, for \cref{as:ISS} to be satisfied, we also need strict control Lyapunov functionals on each (open) bounded subset of $X$. 
To that end,
we can ``strictify'' the total energy $V$.
Let $\eps > 0$ and define
\begin{equation}
\label{eq:V-eps-def}
V_\eps(v, \phi, p, \omega) \triangleq V(v, \phi, p,  \omega)  + \eps \int_\Omega v p \, \d \xi + \eps \phi \omega    +  \frac{\eps \lambda}{2} \int_\Omega |v|^2 \, \d \xi + \frac{ \eps}{2} |\phi|^2
\end{equation}
for all $[v, \phi, p, \omega] \in X$. Then, along solutions to \cref{eq:beam-v}-\cref{eq:torque},
\begin{equation}
\label{eq:V-eps}
\dot{V}_\eps = (\eps - \lambda) \int_\Omega \left | \dl{v}{t} \right |^2 \d \xi + (\eps - 1) |\dot{\phi}|^2
  + u \dot{\phi}   - \eps \int_\Omega \left | \ddl{v}{\xi} \right |^2 \, \d \xi - \eps |\phi|^2    + \eps u \phi + \eps R(v, \phi, \dot{v}, \dot{\phi}),
\end{equation}
where the term $R$ is given by:
\begin{equation}
\label{eq:def-R}
R(v, \phi, \dot{v}, \dot{\phi}) \triangleq  - \lambda \dot{\phi} \int_\Omega \xi v \, \d \xi  -  \dot{\phi}^2\int_\Omega (v -  \xi \phi) v \, \d \xi 
 - \phi \dot{\phi} \int_\Omega v \dl{v}{t} \,  \d \xi  +  (\phi \dot{\phi}  - \lambda) \phi  \int_\Omega \xi \dl{v}{t} \, \d \xi.
\end{equation}
Let us deal with the first term in \cref{eq:def-R}: we can estimate
\begin{equation}
\left |  \lambda \dot{\phi} \int_\Omega \xi v \, \d \xi \right |  \leq \frac{1}{4 \eps} |\dot{\phi}|^2 + \eps \lambda^2 L^3  \int_\Omega |v|^2 \, \d \xi.
\end{equation}
Then, we get
\begin{equation}
\label{eq:est-R-1}
\eps \left |  \lambda \dot{\phi} \int_\Omega \xi v \, \d \xi \right |  \leq \frac{1}{4} |\dot{\phi}|^2    + \eps^2 \lambda^2 L^3 K  |\phi |^2   + \eps^2 \lambda^2 L^3  K \int_\Omega \left | \ddl{v}{\xi}  \right |^2 \d \xi, 
\end{equation}
where $K > 0$ is some constant coming from the equivalence of the norms $\|\cdot \|_{H_1}$ and $\|\cdot \|_{H^2(\Omega) \times \R}$ on $H_1$. Thus, going back to \cref{eq:V-eps}, we see that all terms of \cref{eq:est-R-1} can be absorbed into the negative part of the right-hand side of \cref{eq:V-eps} -- here, the $\eps^2$-prefactor is important. Next, let $\B$ be a fixed open bounded subset of $X$. 
We then examine the remainder of \cref{eq:def-R}, which we denote by $R'$, and observe that it can be estimated as follows: there exists a positive constant $K_{\B}$  such that
\begin{equation}
|R'(v, \phi, p, \omega)| \leq K_\B \int_\Omega |p|^2 \, \d \xi + K_\B |\omega|^2
\end{equation}
for all $[v, \phi, p, \omega] \in \B$. Therefore, for any solution to \cref{eq:beam-v}-\cref{eq:torque} that remains in $\B$, the term $R'$, once multiplied by $\eps$, can be absorbed into the ``good'' terms in \cref{eq:V-eps}, provided that $\eps$ is chosen sufficiently small. Finally, we are left with the term $u \dot{\phi} + \eps u \phi$, which is readily dealt with using Young's inequality. At this point we have proved that for each (open) bounded subset $\B$ of $X$, there exists $\eps > 0$ such that the Lyapunov candidate $V_\eps$ as defined in \cref{eq:V-eps-def} satisfies the requirements of \cref{as:ISS}.
Note that with any $V_\eps$ with $\eps$ sufficiently small, we can also prove that the semigroup generated by $A$ (i.e., the linear part of \cref{eq:beam-v}-\cref{eq:torque}) is exponentially stable and possesses a coercive Lyapunov functional. Indeed, because the pre-stabilizing feedback \cref{eq:torque} is meant to cancel out the nonlinear terms in the energy balance, \cref{eq:Vdot-beam} holds for the linear problem as well, and so does \cref{eq:V-eps} with $R$ replaced by $0$. 

Now that we have verified that the control system \cref{eq:beam-v}-\cref{eq:torque} satisfies \cref{as:ISS,as:smooth} (and thus \cref{as:forwarding} as well by \cref{claim:sylvester}), we can
put it in cascade with the output integrator
\begin{equation}
\label{eq:int-theta-ref}
\dot{z} = \phi = \theta - \theta_{\mathrm{ref}}
\end{equation}
and finally use the forwarding feedback law \cref{eq:forwarding-feedback} to control the extended $[v, \phi, z]$-system. This is made possible by \cref{claim:sylvester}, which provides the (unique) solution $\M$ to \cref{eq:forwarding} and guarantees its required Lipschitz properties. Note again that $\M$ do not depend on the particular choice of $\theta_\mathrm{ref}$.
In order to apply \cref{claim:GAS,claim:LES}, it remains to check the non-resonance condition \cref{eq:non-resonance}. Here, this is done by computing the input-to-steady-state map: 
 constant input $u$ yields a stationary solution $[v, \phi]$ given by $v(\xi) = u \xi$, $\phi = u$. Therefore, by \cref{claim:LES,claim:GAS}, the controller \cref{eq:forwarding-feedback} achieves global asymptotic stabilization and local exponential stabilization in $X \times Y = H_1 \times H_0 \times \R$ of the nonlinear system \cref{eq:beam-v}-\cref{eq:torque} in cascade with \cref{eq:int-theta-ref}. Coming back to the original $[w, \theta]$-coordinates, we then see that the (unique) equilibrium at which $\theta = \theta_\mathrm{ref}$ is globally asymptotically stable and locally exponentially stable, and this holds for any choice of $\theta_\mathrm{ref}$. In summary, we have designed a dynamic feedback law for the original nonlinear plant \cref{eq:w-beam} that enables global set-point output tracking of the angular position $\theta$ at any reference $\theta_\mathrm{ref}$.

 \begin{rem}
In this analysis, we were able to derive global results in both initial data and reference by applying \cref{claim:LES,claim:GAS}
 after a suitable change of variables and control input instead of relying on the local \cref{th:set-point}. The underlying key property which we used here is that the original equations \cref{eq:w-beam}, although nonlinear, are left invariant under constant inputs -- of course, up to a change of variable related to the resulting steady state. \Cref{th:set-point} is applicable to systems lacking this feature but provides weaker results in comparison.
 \end{rem}

\section{Concluding remarks}

We have presented new results for stabilization of nonlinear systems consisting of a  semilinear component 
whose  output is integrated by a neutrally stable linear subsystem. For this purpose, we have introduced a new class of nonlinear Sylvester equations, which we demonstrated to be solvable. Our approach also provides a solution for the local set-point output tracking problem. As a case study, we have investigated the nonlinear dynamics of a flexible beam attached to a rigid body.

Now, while the formula \cref{eq:M-nonlin} completely determines the nonlinear map $\M$ and its Fr\'echet differential, and thus the state feedback \cref{eq:forwarding-feedback}, the exact implementation in practice seems out of reach in most cases. 
This is an issue even in the finite-dimensional context \cite{PraOrt01}. Nevertheless, 
in view of \cref{eq:M-nonlin},
our control based on the nonlinear map $\M$ can be seen as a perturbation of the linear solution $\M_0$ of \cref{eq:sylvester-lin}. Hence, the additional integral containing the nonlinear perturbation terms in \cref{eq:M-nonlin} can be interpreted as compensation terms that improve the control brought by $\M_0$. It is expected that taking into account even approximations of these compensation terms should lead better closed-loop performances, as it should be investigated in future works.

\appendix

\section{Additional material for the proofs of \cref{claim:sylvester,th:set-point}}
\label{sec:add-proof}

This section contains additional technical details and is intended for reading along \cite{VanBri23}.

\begin{proof}[Proof of \cref{claim:sylvester} (continued, sketch)]
It remains to prove that $\M$ is differentiable and $\d \M$ is locally Lipschitz continuous. First, the nonlinear semigroup $\{\T_t\}_{t \geq 0}$ is Fr\'echet differentiable \cite[Lemma 4.5]{VanBri23}. Furthermore, because $\d f$ is locally Lipschitz continuous with $\d f(0) = 0$, it follows from \cref{eq:ineq-first-var} that
there exists an open neighborhood $\mathcal{V}$ of $0$ in $X$ such that for all $x \in \mathcal{V}$ and $\delta \in \D(A)$,
\begin{equation}
\label{eq:new-first-var}
\langle A\delta + \d f(x)\delta, P \delta \rangle_X \leq - \frac{\mu}{2} \|\delta\|^2_X. 
\end{equation}
\Cref{eq:new-first-var} generalizes \cite[Hypothesis 3.3, item (ii)]{VanBri23}, where the same property had to hold globally in $x$ and with $P = \id$. It implies that, in the region $\mathcal{V}$, the nonlinear semigroup  $\{\T_t\}_{t \geq 0}$ is strictly contractive with respect to the (equivalent) norm $\| P^{1/2} \cdot \|_X$. This is enough for our purpose: by the property of semiglobal exponential stability, for any bounded set $\B$, there exists $T_\B > 0$ such that $\T_t \B \subset \mathcal{V}$ for all $t \geq T_\B$, and we then can obtain estimates of the form
\begin{equation}
\label{eq:exp-cont}
\| \T_tx_1 - \T_tx_2\|_X \leq K e^{- \tilde{\mu} t} \|x_1 - x_2\|_X
\end{equation}
holding for all $t \geq T_\B$ and $x_1, x_2 \in \B$, and also
\begin{equation}
\label{eq:exp-cont-dT}
\| [\d \T_t(x_1) - \d \T_t(x_2)] \delta \|_X \leq K e^{- \tilde{\mu} t} \|\delta\|_X \|x_1 - x_2 \|_X
\end{equation}
for all $t \geq T_\B$, $x_1, x_2 \in \B$ and $\delta \in X$, where the positive constants $K$ and $\tilde{\mu}$ can be chosen independent of $\B$. On the other hand, by the local Lipschitz properties of the nonlinear terms and the fact that there is a bounded set that contains all $\T_t \B$, $t \geq 0$, we can also obtain counterparts to \cref{eq:exp-cont,eq:exp-cont-dT} with exponential growth instead of decay and constants depending on $\B$, which is enough do deal with the finite time interval $[0, T_\B]$.
Therefore, one can adapt the arguments in the proof of \cite[Theorem 3.4]{VanBri23} by splitting in two the integral in \cref{eq:M-nonlin} with the time $T_\B$, where $\B$ is an arbitrary but fixed bounded set, ``differentiate under the integral sign'' in \cref{eq:M-nonlin} with Lebesgue's dominated convergence theorem to obtain differentiability,
and prove Lipschitz continuity of the differential by taking advantage of \cref{eq:exp-cont,eq:exp-cont-dT}. Note that \cite[Theorem 3.4]{VanBri23} deals with the case $S = 0$, 
but ``new'' terms stemming from the presence of $S$ in \cref{eq:ineq-first-var} are linear and do not change much to the proof.
\end{proof}

\begin{proof}[Proof of \cref{th:set-point} (sketch)]
At the beginning of \cite[Section 4.2.1]{VanBri23}, the proof of \cite[Theorem 3.2]{VanBri23} is outlined in five items. Let us describe the differences with \cref{th:set-point} itemise. We work in the new coordinate system $[x, \eta] \triangleq [x, z - \M(x)]$.
\begin{enumerate}[label=\arabic*.]
    \item This corresponds to \cref{eq:local-coerc}.
    \item A similar property can be obtained by taking advantage of \cref{eq:local-coerc,eq:new-first-var}.
    \item A counterpart to \cite[Lemma 4.2]{VanBri23} can be obtained by adapting our Lyapunov analysis from \crefrange{eq:Wb}{eq:Wb-strict} in presence of $\yr$.
    \item The same arguments are valid.
    \item This is not required here.
\end{enumerate}
\end{proof}


\bibliographystyle{abbrv}

\bibliography{forwarding-cdc-hal}

\end{document}